\newtheorem{thm}{Theorem}[section]
\newtheorem{lem}[thm]{Lemma}
\newtheorem{cor}[thm]{Corollary}
\newtheorem{rem}[thm]{Remark}
\theoremstyle{remark}
\theoremstyle{definition}
\newtheorem{defn}[thm]{Definition}
\begin{document}

\author{Jacob McNamara}

\title{
A Bound on the Norm of Shortest Vectors in Lattices Arising from CM Number Fields
}

\date{}

\maketitle

\thispagestyle{empty}

\begin{abstract}

This paper partially addresses the problem of characterizing the lengths of vectors in a family of Euclidean lattices that arise from any CM number field $F$. We define a modified quadratic form on these lattices, the \emph{weighted norm}, that contains the standard field trace as a special case. Using this modified quadratic form, we obtain a bound on the field norm of any vector that has a minimal length in any of these lattices, in terms of a basis for $\mathcal{O}_F^{\times}$, the group of units of the ring of integers of the field $F$. For any CM number field $F$, we prove that there exists a finite set of elements of $F$ which allows one to find the set of minimal vectors in every principal ideal of the ring of integers of $F$. We interpret our result in terms of the asymptotic behavior of a Hilbert modular form, and consider some of the computational implications of our theorem. Additionally, we show how our result can be applied to the specific Craig's Difference Lattice problem, which asks us to find the minimal vectors in lattices arising from cyclotomic number fields.

\end{abstract}

\newpage

\section{Introduction}

In this paper, we consider a family of lattices that are related to CM number fields, central objects of study in number theory. By \emph{lattice}, we mean a discrete subgroup of $\mathbb{R}^n$ equipped with a quadratic form. A \emph{CM number field} is a number field for which complex conjugation can be defined consistently. It is this complex conjugation operation which allows us to generate a lattice from any CM number field, as described in Section \ref{CM}. Though lattices can be studied on their own as purely geometrical objects, there is a long history of lattices playing a role in the development of number theory. While many classical problems in number theory have been approached and solved using elementary methods, adopting the more sophisticated perspective based around lattices, and their associated modular forms, allows us to see how these classical problems fit into the larger and deeper abstract theory. This abstract perspective drives modern number theory research by suggesting new questions that seek to generalize results from elementary number theory. This paper considers, through the use of lattices, one of many possible generalizations of the classical $n$-squares problem. By looking at the history of the $n$-squares problem, we can see how the lattice approach naturally leads to the overarching question under investigation in this paper, namely, to characterize the lengths of vectors in lattices which arise from CM number fields.

The $n$-squares problem asks: given an integer
$m$, how many ways can $m$ be represented as a sum of $n$ squares,

\begin{equation}
\label{nsquares}
m = x_1^2 + x_2^2 + \dots + x_n^2,
\end{equation}

\noindent where the $x_k$ are integers? This problem is a collection of specific cases which have been studied by many mathematicians. For example, Lagrange's Four Squares Theorem addresses the $n$-squares problem for the case $n=4$ in the affirmative, by proving that any integer can be represented as a sum of four squares in at least one way. While Lagrange gave an elementary proof of the Four Squares Theorem in 1770 \cite{4squares}, a more sophisticated approach to the four squares problem involves the Hurwitz Integers (see \cite{Hurwitz}), a lattice in $\mathbb{R}^4$, which is the analog of the integers in the quaternions, a four-dimensional normed division algebra.

Similarly, while the two squares problem was originally solved with an elementary approach, it may also be solved with the use of the Gaussian Integers $\mathbb{Z}[i]$, a lattice in $\mathbb{C}$, as outlined in \cite{Gaussian}. The solution to the two squares problem is slightly more complicated than the Four Squares Theorem, in that an odd prime $p$ can be represented as a sum of two squares if and only if $p \equiv 1\ \text{mod}\ 4$. The general solution for two squares can be built up from the prime cases using the identity,

\begin{equation}
\label{normidentity}
(a^2 + b^2)(c^2 + d^2) = (ac-bd)^2 + (ad+bc)^2,
\end{equation}

\noindent which implies that the product of two numbers which are representable as a sum of two squares is itself a sum of two squares. This identity comes from the fact that if $z = a+bi$ and $w = c+di$ are two Gaussian Integers, the left hand side of Equation (\ref{normidentity}) is equal to $(z\bar{z}) (w\bar{w})$, and the right hand side is $(zw) (\bar{z}\bar{w})$. If we define the norm $N(z)$ of a Gaussian Integer by $N(z)=z\bar{z}$, then this is simply the fact that $N(zw)=N(z)N(w)$. This norm is the norm which makes $\mathbb{Z}[i]$ a lattice. From this perspective, the two squares problem asks us to find a vector $z \in \mathbb{Z}[i]$ with $N(z) = m$ for a given $m$.

The three squares problem is more complicated than either the four squares or two squares cases. Its solution requires the class numbers of purely imaginary quadratic fields \cite{Threesquares}, that is, quadratic number fields which can not be embedded into the real numbers. These purely imaginary quadratic fields are the most basic examples of CM number fields, and it is the fact that a CM number field gives rise to a natural quadratic form that allows us to solve the three squares problem.

These three cases indicate that the general $n$-squares problem, finding $\{x_1, x_2, \dots, x_n\}$ such that Equation (\ref{nsquares}) holds, is best described in terms of the lengths of vectors in lattices, and specifically in terms of lattices that arise from particular CM number fields. We consider the more general problem of characterizing the lengths of vectors in the lattices arising from any CM number field. Each CM number field and lattice can be associated with a Diophantine equation, such as the Equation (\ref{nsquares}) for the $n$-squares problem; however, by considering all CM number fields together, we can hope to prove statements which hold in a more general setting than if we simply considered each number field and Diophantine equation individually.

Our main result is an explicit bound, described in Theorem \ref{bound} and Corollary \ref{principal}, on the norm of the shortest vectors in the lattices arising from any CM number field, as well as the family of related lattices that arise from principal ideals of the field's ring of integers. The bound is formulated in terms of a basis for the group of units of the algebraic integers of the field. The key tool which allows us to prove our main theorem is the \emph{weighted norm}, an extension of the standard quadratic form on a number field. The weighted norm is useful in our proof because it depends on a set of weights, which can be adjusted to define different quadratic forms.

In addition to establishing our main bound, we prove Corollary \ref{finite}, which states that there exists a set of elements of the number field from which the minimal vectors in any lattice associated to any principal ideal of the number field can be determined. Consequently, our result can be applied so as to computationally simplify the problem of explicitly finding the shortest vectors in any particular lattice from a CM number field. This paper will highlight this use of our result by examining the Craig's Difference Lattice Problem, the problem of finding the shortest vectors in lattices arising from cyclotomic fields.

This computational significance of our result may help shed light on the well-known Shortest Vector Problem from computer science. This problem asks for an algorithm which, given a lattice, will return the shortest vectors in the lattice. It is known that, for general lattices, the problem is NP-hard \cite{NPhard}, and it is even NP-hard to approximate to within a factor of $\sqrt{2}$ \cite{constanthard}. Future research could determine whether our result can be used to create a polynomial time algorithm for the Shortest Vector Problem for these number theoretic lattices, to determine whether the additional information provided by the number fields from which they are derived is enough to reduce the computational complexity of the Shortest Vector Problem in this case. If so, it would indicate that these number theoretic lattices are significantly more well behaved than general lattices.

In Section \ref{motiv}, we provide some technical motivation for our
definitions, and describe the connection between our result and the theory of modular
forms. In Section \ref{prelims}, we establish definitions and notation
which is used later in the paper. In Section \ref{statement}, we state
our bound, which is proven in Section \ref{proof}. In Section
\ref{consequences}, some immediate consequences of our theorem are
explored, and in Section \ref{craig's} we apply our result to some
specific cases of the Craig's Difference Lattice problem.

\section{Motivation}
\label{motiv}

In this section, we lay the groundwork for understanding the definitions that we use in our proof. We first consider the Craig's Difference Lattice problem, an open problem which is a specific case of the general problem that this paper partially addresses, i.e., characterizing the lengths of vectors in lattices arising from CM number fields. We use the Craig's Lattices as a concrete example of the method of generating a lattice from a CM field. Also, we use this problem here, as an illustration, because we return to the Craig's Difference Lattice problem in Section \ref{craig's}, where we show how our main result can be applied to this problem. We then examine the connection between these lattices arising from CM fields and the more abstract theory of modular forms, and show how our definition of the weighted norm is a natural and useful extension of the standard quadratic form on a CM number field.

The Craig's Difference Lattice problem asks us to find an expression for
the shortest vectors in a certain infinite family of lattices which are
related to the cyclotomic number fields; for an introduction and additional details see Martinet \cite{Martinet}. A (Euclidean) \emph{lattice} is a discrete,
full-rank subgroup of $\mathbb{R}^n$ which inherits the inner product
from $n$ dimensional Euclidean space. Alternately, a rank-$n$ Euclidean lattice $\Lambda$ can be viewed as a free abelian group with an inner product, such that the extension of the inner product to $\Lambda \otimes_\mathbb{Z} \mathbb{R}$ is positive definite. For details on the theory of Euclidean lattices, see \cite{Serre}, \cite{Elkies}.

The Craig's Lattices are defined by taking iterated differences of vectors in earlier lattices in the
series. Let $T$ be the linear endomorphism of $\mathbb{R}^{n+1}$ which acts on the standard basis $\{e_0, e_1, \dots, e_n\}$ by $T(e_j) = e_{j+1}$, where the index $j$ is taken mod $n+1$. We define the Craig's Lattice $\mathbb{A}_n^{(r)}$ as the image of $\mathbb{Z}^{n+1}$ under the map $(1 - T)^r$. We see immediately that $\mathbb{A}_n^{(0)} = \mathbb{Z}^{n+1}$. Also, because $(1-T)$ is a $\mathbb{Z}$-module endomorphism with one eigenvalue 0, we see that $\mathbb{A}_n^{(r)}$ is a rank-$n$ sublattice of $\mathbb{Z}^{n+1}$ for $r>0$. Bachoc and Batut \cite{BacBat} have gotten
partial results on the shortest vectors in the lattice
$\mathbb{A}_n^{(r)}$, mainly in the case that $n$ is one less than a
prime. We will return to the Craig's Difference Lattice problem in Section \ref{craig's}, where we apply our main result to this problem to reduce the number of points that have to be
checked for any given $\mathbb{A}_n^{(r)}$ to a finite
number.

While our main theorem can be applied to simplify the Craig's
Difference Lattice problem, the Craig's Lattices are simply a special
case of a more general type of lattice arising from any \emph{CM number
  field}, i.e., any number field with a consistent complex
conjugation. The Craig's lattices occur as the rings of integers of the
cyclotomic fields and their principal ideals. The ring of integers of
the $p$th cyclotomic field for prime $p$ contains the prime ideal $\mathcal{P}$, generated by $(1 -
\zeta_p)$, where $\zeta_p$ is a primitive $p$th root of unity. The
Craig's lattice $\mathbb{A}_{p-1}^{(r)}$ is then isomorphic as a lattice
to $\mathcal{P}^r$ equipped with a suitable inner product.

This inner product can be defined for any CM number field, which is a
number field with a consistent complex conjugation. Let $F$ be a
CM field, and let $K$ be the real subfield of $F$ that is fixed under complex
conjugation, where we consider complex conjugation to be the nontrivial element of the Galois group $\text{Gal}(F/K) \cong \mathbb{Z}/2\mathbb{Z}$, as described formally in Section \ref{CM}. The field $F$
can, in general, be embedded into $\mathbb{C}$ in $2k$ different
ways for some $k\in \mathbb{N}$. Each conjugate pair of embeddings of $F$ into $\mathbb{C}$ gives
an embedding of $K$ into $\mathbb{R}$, for a total of $k$
embeddings. Denote these real embeddings of $K$ by $\sigma_j: K
\rightarrow \mathbb{R}$ for $1 \leq j \leq k$.

By composition with the norm from $F$ to $K$ which takes $\alpha$ to
$\alpha \bar{\alpha}$, each $\sigma_j$ defines a quadratic form on $F$ that takes
$\alpha$ to $\sigma_j(\alpha \bar{\alpha})$. We can
combine these norms into a single quadratic form by weighting each norm $\sigma_j$
with a positive real number $x_j$ and summing,

\begin{equation*}
\langle \alpha, \alpha \rangle_{x_1, x_2, \dots, x_k} = \sum\limits_{j=1}^k x_j \sigma_j(\alpha \bar{\alpha}).
\end{equation*}

\noindent We call this the \emph{weighted norm} on $F$ and  $\{
x_1,x_2,\dots,x_k \}$ the \emph{weights}. We extend this definition to $\langle \cdot, \cdot \rangle_{z_1,z_2,\dots,z_k}$ for complex $z_j$,
but this is a Euclidean norm only if all the $z_j$ are real and positive. If we restrict the
weighted norm to the ring of integers of $F$, denoted $\mathcal{O}_F$,
we get a family of lattices, one for each possible choice of weights $\{ x_1, \dots, x_k\}$, where each lattice is $\mathcal{O}_F$ as a group with the inner product $\langle \cdot, \cdot \rangle_{x_1, \dots, x_k}$. If $F$ is a cyclotomic field, we recover the zeroth Craig's
Lattices $\mathbb{A}_n^{(0)}$ by letting $x_j = 1$ for all $j$. Similarly, we can restrict
the weighted norm to a principal ideal of $\mathcal{O}_F$ to get more
lattices, including the lattices $\mathbb{A}_n^{(r)}$.

\begin{rem}
The standard quadratic form on a CM number field, $\text{Tr}_{K/\mathbb{Q}}(\alpha \bar{\alpha})$, is a special case of the weighted norm, namely when $x_j = 1$ for all $j$. However, the ability to vary the weights which define the weighted norm makes it more flexible than the field trace, in that it is able to describe a larger family of related lattices rather than one static lattice. We hope to show, in the following discussion of the relationship between these lattices and modular forms, that the weighted norm is a natural extension of the field trace, and that the weighted norm's flexibility allows us to encapsulate information about all the principal ideals of the ring of integers of $F$ in a single object. Further, its flexibility will be key in the proof of our main theorem.
\end{rem}

The weighted norm suggests a generalization of a fundamental invariant
of a lattice, the \emph{$\theta$ function}, defined for a lattice $\Lambda$ as,

\begin{equation*}                                                                                                                                                                 
\theta_\Lambda(z)=\sum\limits_{v \in \Lambda} e^{\pi i z \langle v, v \rangle/2},                                                                                                  
\end{equation*}

\noindent where $z$ is in the upper half plane $\mathcal{H} = \{ z \in \mathbb{C} \ |\ \Im (z) > 0 \}$ and $\langle \cdot, \cdot \rangle$ is the inner product of $\Lambda$. The $\theta$ function is a generating function for the lattice, in that, if we write $q =
e^{2 \pi i z}$, the coefficient of $q^{m/2}$ in $\theta_\Lambda(z)$ is the
number of points in $\Lambda$ with norm $m$. The $\theta$ function is a modular form for a
subgroup of the modular group $\text{SL}_2(\mathbb{Z})$, possibly of half integer weight, for large classes of lattices, as shown by
Elkies \cite{Elkies}.

However, if we
are to take seriously the fact that we are working in a field $F$, not
$\mathbb{Q}$, there is nothing special about modular forms for the group
$\text{SL}_2(\mathbb{Z})$, because they do not take into account the fact that the set of algebraic integers of $F$ is larger than that of $\mathbb{Q}$. Rather, we want an automorphic form
for the \emph{Hilbert modular group}, the group
$\text{SL}_2(\mathcal{O}_K)$, which consists of two-by-two matrices of
determinant one with entries in $\mathcal{O}_K$, the ring of integers of
the real subfield of $F$. For an introduction to the theory of Hilbert modular forms, see \cite{Hilbgenref}. Since there are $k$ ways to embed $K$ into
$\mathbb{R}$, there are $k$ corresponding ways to embed
$\text{SL}_2(\mathcal{O}_K)$ into $\text{SL}_2(\mathbb{R})$. We call these $k$ embeddings $\sigma_j$. We can
now define an action of the Hilbert modular group on $k$-tuples of
complex numbers $( z_1, z_2, \dots, z_k ) \in \mathcal{H}^k$ by

\begin{equation*}
\gamma ( z_1, z_2, \dots, z_k ) = ( \sigma_1(\gamma) z_1,
\sigma_2(\gamma) z_2, \dots, \sigma_k(\gamma) z_k ),\ \gamma \in \text{SL}_2(\mathcal{O}_K).
\end{equation*}

\noindent We want to find an automorphic form for this action of the Hilbert
modular group that is characteristic of the field $F$. We define the
\emph{$\psi$ function} as

\begin{equation*}
  \psi_F(z_1, z_2, \dots, z_k)=\sum\limits_{\alpha \in \mathcal{O}_F} e^{\pi i \langle
\alpha, \alpha \rangle_{z_1,z_2,\dots,z_k}}. 
\end{equation*}

\noindent This function is examined in \cite{Garrett}. When $F$ is a quadratic
field, the $\psi$ function is simply a modular form, since
there is only a single embedding of $F$ into $\mathbb{C}$ up to
conjugation; these modular forms have been studied by Hecke \cite{Hecke} and others. For a general CM field, the $\psi$ function is automorphic for a subgroup of the Hilbert modular group
\cite{Hecke}.

When all the arguments are identical, the $\psi$ function of the field $F$ reduces to the $\theta$ function of $\mathcal{O}_F$,

\begin{equation*}
\psi_F(z,z,\dots, z) = \theta_{\mathcal{O}_F}(z).
\end{equation*}

\noindent More interestingly, the $\psi$ function contains the $\theta$ function of any
principal ideal of $\mathcal{O}_F$. If $\mathcal{I}$ is a principal
ideal of $\mathcal{O}_F$ generated by an element $\kappa$, we have

\begin{equation*}
\begin{array}{rl}
\psi_F(z \sigma_1(\kappa \bar{\kappa}), z \sigma_2(\kappa \bar{\kappa}), \dots, z
\sigma_k(\kappa \bar{\kappa})) & = \sum\limits_{\alpha \in \mathcal{O}_F}
e^{\pi i \langle
\alpha, \alpha \rangle_{z \sigma_1(\kappa \bar{\kappa}), z \sigma_2(\kappa \bar{\kappa}), \dots, z
\sigma_k(\kappa \bar{\kappa})}} \\
 & = \sum\limits_{\alpha \in \mathcal{O}_F} e^{\pi i
   \langle \kappa \alpha, \kappa \alpha \rangle_{z,z,\dots,z}} \\
 & = \sum\limits_{\alpha \in \mathcal{I}} e^{\pi i
   \langle \alpha, \alpha \rangle_{z,z,\dots,z}} \\
 & = \theta_\mathcal{I}(z).
\end{array}
\end{equation*}

\noindent Thus, we can see that the $\psi$ function encodes much richer information about
$\mathcal{O}_F$ than the $\theta$ function does by itself.

Finally,
we consider what happens as we let all of the arguments $z_j$ of
$\psi_F$ go to $i \infty$, i.e., how does $\psi_F$ behave at the
cusp? Unlike ordinary modular forms, there are many ways the arguments can
approach $i \infty$. We can assume that all of the $z_j$ are purely
imaginary, as other situations can be covered by analytic
continuation. Further, we can chose a path by fixing the ratios of the $z_j$, and introducing a real parameter $t$, which we will let go to infinity. Combining these, we write $z_j = t i x_j$, where $x_j$ are real weights. We then have

\begin{equation*}
\begin{array}{rl}
\psi_F(t i x_1, t i x_2, \dots, t i x_k) & = \sum\limits_{\alpha \in
  \mathcal{O}_F} e^{- t \pi \langle \alpha, \alpha \rangle_{x_1, x_2,
    \dots, x_k}} \\
 & = 1 + n e^{- t \pi \mu} + \text{o} (e^{- t \pi \mu}),
\end{array}
\end{equation*}

\noindent as $t$ goes to $\infty$, where $\mu$ the minimal value of the weighted norm for the given ratios of $x_j$ and $n$ is the number of
vectors with norm $\mu$. This means that the behavior of $\psi_F$ at
the cusp is determined by those vectors in $\mathcal{O}_F$ which have
minimum weighted norms for some choice of $x_j$. Because the $\psi$
function contains the $\theta$ functions of all principal ideals of
$\mathcal{O}_F$, understanding the $\psi$ function's behavior at the cusp
will tell us about the behavior of each $\theta$ function at the cusp,
which is equivalent to knowing the shortest vectors in each principal ideal. The fact that the $\psi$ function can contain information about all principal ideals of the number field is a direct consequence of its incorporation of the more flexible weighted norm discussed above, as opposed to the field trace.

\section{Preliminaries}
\label{prelims}

In this section, we formally define the objects we use in the proof of our main result.

\subsection{Lattices from CM number fields}

In this section, we show how to generate a lattice from a CM number field.

\begin{defn}
\label{CM}
A number field $F$ is a \emph{CM number field} if it is a totally imaginary quadratic
extension of a totally real field $K$.
\end{defn}

In Definition \ref{CM}, by \emph{totally real field}, we mean a number
field for which every complex embedding lies in the real numbers, and by \emph{totally imaginary}, we mean a field that cannot be embedded into $\mathbb{R}$. An
immediate consequence of this definition is that the field $F$ has a
complex conjugation operation which is independent of its embedding into
$\mathbb{C}$. To see this, note that the Galois group $\text{Gal}(F/K)
\cong \mathbb{Z}/2\mathbb{Z}$
is the subgroup of $\text{Gal}(F/\mathbb{Q})$ which fixes $K$. Since any
embedding $\sigma$ of $K$ into $\mathbb{R}$ can be identified with an element of
the group $\text{Gal}(K/\mathbb{Q})$, we get a pair of conjugate
embeddings $\sigma, \bar{\sigma}$ of $F$ into $\mathbb{C}$, which are
identified with the coset $\sigma \cdot \text{Gal}(F/K)$ in $\text{Gal}(F/\mathbb{Q})$.

As in Section \ref{motiv}, we denote the set of $k$ embeddings of $K$ into $\mathbb{R}$ by $\{
\sigma_1, \sigma_2, \dots, \sigma_k \}$. Each embedding $\sigma_j$ gives a norm,
which sends $\alpha \mapsto \sigma_j(\alpha\bar{\alpha})$. We combine these into
a single map, the weighted norm, which depends on a set of $k$ weights
$\{ x_1, x_2, \dots, x_k \}$. To prove that this is a norm, we note that
it is sufficient to show that each map $\alpha \mapsto
\sigma_j(\alpha\bar{\alpha})$ is a norm. This is true because, in any embedding,
mapping a complex number $z \mapsto z\bar{z}$ is a norm on
$\mathbb{C}$, and consequently is also a norm on $\sigma_j(F)$.

Equipped with the weighted norm, the ring of integers $\mathcal{O}_F$ of the number field $F$ becomes a lattice, which is the primary lattice we mean when we refer to lattices generated from CM number fields. Additionally, each principal ideal of $\mathcal{O}_F$ also becomes a lattice when equipped with the weighted norm.

\subsection{Group of units}

We define notation, related to the group of units of $\mathcal{O}_F$, which we use in our proof. By Dirichlet's unit theorem (see \cite{algnum}), we know that the group of units, modulo torsion, in
$\mathcal{O}_F$ is free abelian on $(k-1)$ generators; this is equivalent to the group of units, modulo $\{\pm 1\}$, of $\mathcal{O}_K$, the ring of integers of the real field $K$, because $F$ is a CM field. We
denote this free abelian group by $U$, and a set of generators of $U$ by $\{ g_1, g_2, \dots, g_{k-1} \}$. We define sets of $k$ points $\Delta_s$ indexed by $s \in S_{k-1}$, where $S_{k-1}$ is the group of permutations of $\{ 1,2, \dots, k-1 \}$, by

\begin{equation*}
\Delta_s = \{1, g_{s(1)}, g_{s(1)}g_{s(2)}, \dots, \prod_{l=1}^{k-1} g_{s(l)}\}.
\end{equation*}

We call the union of the $\Delta_s$ the \emph{fundamental domain}, which we denote $\mathcal{D}$. If we consider $U \cong \mathbb{Z}^{k-1}$ to be a subset of $\mathbb{R}^{k-1}$, then each $\Delta_s$ forms the vertices of a $(k-1)$-simplex, and the fundamental domain forms the vertices of the standard $(k-1)$-dimensional unit hypercube:

\begin{equation*}
\bigcup_{s \in S_{k-1}} \Delta_s = \left\{ \prod_{j = 1}^{k-1} g_j^{a_j}\ |\ a_j \in \{ 0,1\} \right\}.
\end{equation*}

\subsection{Additional notation}

We introduce some additional notation that will be useful in stating our main
theorem. The field norm is the map from $F$ to $\mathbb{Q}$ that takes $\alpha
\mapsto N_\mathbb{Q}(\alpha) = \sigma_1(\alpha\bar{\alpha}) \sigma_2(\alpha\bar{\alpha}) \dots
\sigma_k(\alpha\bar{\alpha})$. We also define a map from $F$ to
$\mathbb{R}^k$, denoted $\Sigma$, that maps

\begin{equation*}
\Sigma: \alpha \mapsto \left(
  \sigma_1(\alpha\bar{\alpha}),\sigma_2(\alpha\bar{\alpha}), \dots, \sigma_k(\alpha\bar{\alpha}) \right).
\end{equation*}

\noindent We also denote by $\Sigma(\Delta_s)$ the simplex with vertices at $\{ \Sigma(u_0), \Sigma(u_1), \dots, \Sigma(u_{k-1}) \}$, where $\Delta_s = \{ u_0, u_1, \dots, u_{k-1} \}$.

\section{Statement of bound}
\label{statement}

The main theorem of this paper establishes a bound on the field norm of any vector which has minimal weighted norm for some
choice of weights. We first define matrices which are used in the statement of the theorem.

\begin{defn}
Let the set of vertices of a $k$-simplex in $\mathbb{R}^k$ be denoted
$\{ v_1, v_2, \dots, v_k \}$. We define the matrix $A$ to be the $k
\times k$ matrix whose $j$th row is the vector $v_j$. Let $B$ be the $(k-1) \times k$ matrix whose $j$th row
is $(v_{j+1}-v_j)$. The matrix $B_l$ is defined as the $(k-1) \times (k-1)$ matrix formed from
$B$ by deleting the $l$th column.
\end{defn}

We can now state our main theorem.

\begin{thm}
\label{bound}
Let $F$ be a CM number field, and let $\mathcal{O}_F$ be its ring of
integers. Let $\alpha$ be a nonzero vector which has a minimal weighted norm
$\langle \alpha, \alpha \rangle_{x_1, x_2, \dots, x_k}$ for some choice
of $\{ x_1, x_2, \dots, x_k \}$. Then, $\alpha$ must satisfy:
\begin{equation*}
N_\mathbb{Q}(\alpha) \leq \text{\emph{max}} \left|  \left(
    \frac{\text{\emph{det}} A}{k} \right)^k
  \prod_{l = 1}^k \frac{1}{\text{\emph{det}} B_l} \right|,
\end{equation*}
\noindent where the maximum is taken over the simplices $\Sigma(\Delta_s)$ for all
$s \in S_{k-1}$.
\end{thm}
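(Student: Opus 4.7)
The plan is to combine the minimality of $\alpha$ with a Lagrange-multiplier calculation on a specific hyperplane. Write $T_j = \sigma_j(\alpha\bar\alpha)$, so that $N_\mathbb{Q}(\alpha) = \prod_j T_j$. Global minimality of $\alpha$ in $\mathcal{O}_F$, applied to the test elements $\beta = u$ for $u \in \Delta_s$ (each of which is a unit), gives a system of $k$ linear inequalities $\sum_j x_j T_j \leq \sum_j x_j \sigma_j(u\bar u)$ per simplex. Separately, the affine hyperplane $H_s$ through the vertices of $\Sigma(\Delta_s)$ has equation $\langle n_s, v\rangle = \det A_s$, where $n_{s,l} = (-1)^{l+1}\det B_{s,l}$. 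This identity comes from row-reducing $A_s$ so that its first row is $v_1$ and its remaining rows become the rows of $B_s$, then cofactor-expanding the determinant along the first row; in particular, the half-space $\{v \in \mathbb{R}^k_{>0} : \langle n_s, v\rangle \leq \det A_s\}$ is intrinsic to $s$ and independent of the weights.

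The main geometric step is to show that, for some $s^* \in S_{k-1}$, the vector $T = \Sigma(\alpha)$ lies in the half-space $\{v \in \mathbb{R}^k_{>0} : \langle n_{s^*}, v\rangle \leq \det A_{s^*}\}$ associated to $\Delta_{s^*}$. Since $N_\mathbb{Q}(\alpha)$ is invariant under multiplication by units, I am free to replace $\alpha$ with $u^*\alpha$ for a suitable $u^* \in \mathcal{O}_F^\times$. The simplices $\{\Delta_s\}_{s \in S_{k-1}}$ triangulate the hypercube $\mathcal{D}$, which is a fundamental domain for the free part of $\mathcal{O}_F^\times$, so the shifted weights will land in a unique chamber $\Delta_{s^*}$ of this triangulation. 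I would then argue that the linear inequalities from the first paragraph, applied to the vertices of $\Delta_{s^*}$, combine with positivity of the weights to force the desired half-space containment. I expect this alignment between the triangulation $\{\Delta_s\}$ and the normal fan of $\Sigma(\mathcal{O}_F^\times)$ at the vertex $(1,\ldots,1)$ to be the main obstacle, and it will also require verifying that the signs of $n_{s,l}$ for varying $l$ are consistent, so that the resulting half-space is actually bounded inside the positive orthant.

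With the half-space containment in hand, the bound follows from a Lagrange multiplier (or weighted AM-GM) calculation: the maximum of $\prod_j v_j$ over $\{v \in \mathbb{R}^k_{>0} : \langle n_{s^*}, v\rangle \leq \det A_{s^*}\}$ is attained at $v^*_l = (\det A_{s^*})/(k\, n_{s^*, l})$ and equals $(\det A_{s^*})^k/(k^k \prod_l n_{s^*, l})$. Taking absolute values absorbs the sign ambiguity in the cofactors $\det B_{s^*, l}$, and then taking the maximum over $s \in S_{k-1}$ (since $s^*$ depends on $\alpha$ and is only known a posteriori) yields the stated bound $N_\mathbb{Q}(\alpha) \leq \max_s |(\det A_s/k)^k \prod_l (1/\det B_{s,l})|$.
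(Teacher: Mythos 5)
Your endgame does match the paper's: the cofactor identity $\sum_{l}(-1)^{l+1}(\det B_{s,l})\,v_l=\det A_s$ for the affine hyperplane through $\Sigma(\Delta_s)$, and the AM--GM/Lagrange maximization of $\prod_j v_j$, are exactly the computation the paper carries out with wedge products, the Hodge star, and the centroid of a transformed simplex. The genuine gap is the middle step, which you yourself label ``the main obstacle'' and leave as ``I would then argue'': showing that, after multiplying $\alpha$ by a suitable unit, $\Sigma(\alpha)$ lies in the half-space below the hyperplane of some $\Sigma(\Delta_{s^*})$. This is the actual content of the theorem, and the mechanism you sketch does not deliver it. The minimality inequalities you invoke, $\sum_j x_j T_j\le\sum_j x_j\sigma_j(u\bar u)$ for the $k$ vertices $u$ of a single chamber, are all half-space conditions with the \emph{same} normal $x$ (the weight vector), whereas the target half-space has normal $n_{s^*}$; being below the $x$-hyperplanes through $k$ points does not imply $\langle n_{s^*},T\rangle\le\det A_{s^*}$ unless $x$ is aligned with $n_{s^*}$ in a way you never establish. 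Moreover, your selection rule for $s^*$ (``where the shifted weights land'') lives in the wrong space: the chambers $\Delta_s$ triangulate the fundamental domain on the unit-lattice (logarithmic) side, while $x$ is a covector, so ``landing in a chamber'' is undefined without precisely the normal-fan alignment you defer; and since the canonical triangulation's simplices need not be faces of the convex hull of $\Sigma(U)$, a chamber chosen this way can simply be the wrong one even when the conclusion holds.

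What closes this gap in the paper is its pair of lemmas: minimality for some positive weights is equivalent to $\Sigma(\alpha)$ lying on the boundary of the convex hull of the nonzero image of $\mathcal{O}_F$ (this uses minimality against \emph{all} lattice points, not just $k$ units), and the triangulated surface built from the unit translates $\Sigma(u\Delta_s)$ lies on or above that boundary, so any point whose field norm exceeds $\max N_\mathbb{Q}$ over the surface lies strictly above it and cannot be on the hull boundary; unit-invariance of $N_\mathbb{Q}$ then reduces everything to the fundamental-domain simplices. You would need this separation argument (or an equivalent one using minimality against all units) to obtain your half-space containment. A secondary issue: the maximum of $\prod_j v_j$ over your half-space is finite only if all the signed cofactors $(-1)^{l+1}\det B_{s,l}$ have the same sign, which you flag but do not prove; the paper sidesteps this by maximizing over the compact simplex inside a hyperplane slice of the positive orthant rather than over an entire half-space.
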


We prove Theorem \ref{bound} in Section \ref{proof}. 

\section{Proof of bound (Theorem \ref{bound})}
\label{proof}

\subsection{Reduction to convex hull}

We want to find the set of points
$\Sigma(\alpha) \in M$ such that $\alpha$ is a minimum for some weighted norm over
$\mathcal{O}_F$, where $M$ the nonzero points in the image of the map $\Sigma:
\mathcal{O}_F \rightarrow \mathbb{R}^k$. We have the following lemma.

\begin{lem}
\label{hull}
An element $\alpha \in \mathcal{O}_F$ has a minimum weighted norm for
some choice of weights if and only if $\Sigma(\alpha)$ is on the
boundary of the convex hull of $M$.
\end{lem}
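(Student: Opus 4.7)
The plan is to recognize the weighted norm as a linear functional on $\mathbb{R}^k$ applied to $\Sigma(\alpha)$, and then invoke the supporting hyperplane theorem for $\mathrm{conv}(M)$. The key identity is
\[
\langle \alpha, \alpha \rangle_{x_1,\ldots,x_k} = \vec{x} \cdot \Sigma(\alpha), \qquad \vec{x} = (x_1,\ldots,x_k),
\]
so that $\alpha$ minimizes the weighted norm with weights $\vec{x}$ over $\mathcal{O}_F \setminus \{0\}$ exactly when $\Sigma(\alpha)$ minimizes the linear functional $p \mapsto \vec{x}\cdot p$ over $M$.

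The forward direction is direct. If $\alpha$ is a minimizer for some strictly positive $\vec{x}$, then every $p \in M$ satisfies $\vec{x}\cdot p \ge \vec{x}\cdot\Sigma(\alpha)$; the same inequality extends by convexity to $\mathrm{conv}(M)$, so the bounding hyperplane $\{p : \vec{x}\cdot p = \vec{x}\cdot\Sigma(\alpha)\}$ is a supporting hyperplane of $\mathrm{conv}(M)$ at $\Sigma(\alpha)$, placing $\Sigma(\alpha)$ on the boundary.

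For the reverse direction, suppose $\Sigma(\alpha) \in \partial\,\mathrm{conv}(M)$. The supporting hyperplane theorem produces a nonzero $\vec{x} \in \mathbb{R}^k$ with $\vec{x}\cdot p \ge \vec{x}\cdot\Sigma(\alpha)$ for all $p \in M$, and the remaining task is to show $\vec{x}$ can be taken with every $x_j > 0$. To rule out $x_j < 0$, I would invoke Dirichlet's unit theorem: the log embedding $u \mapsto (\log\sigma_1(u\bar u),\ldots,\log\sigma_k(u\bar u))$ sends $\mathcal{O}_F^{\times}$ onto a rank-$(k-1)$ lattice in the trace-zero hyperplane of $\mathbb{R}^k$, so for any coordinate $j$ one can produce units $u_n$ with $\sigma_j(u_n\bar u_n) \to \infty$ while $\sigma_i(u_n\bar u_n) \to 0$ for $i \neq j$. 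Along such a sequence $\Sigma(u_n) \in M$ and $\vec{x}\cdot\Sigma(u_n) \to -\infty$, contradicting the lower bound.

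The main obstacle I anticipate is the final step of upgrading $\vec{x}\ge 0$ to $\vec{x} > 0$ strictly, since the weighted norm is only a Euclidean form when all weights are strictly positive. My plan for this delicate step is to observe that the set of supporting normals $C_\alpha$ at $\Sigma(\alpha)$ is a closed convex cone in $\mathbb{R}^k_{\ge 0}$, and to perturb any $\vec x \in C_\alpha$ with vanishing entries by a small positive multiple of $\sum_{j\colon x_j=0}\vec e_j$; the same unit-theoretic argument prevents $\mathrm{conv}(M)$ from having a supporting face extending to infinity along a positive coordinate direction, so this perturbation remains in $C_\alpha$ for sufficiently small $\epsilon > 0$. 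Iterating over the vanishing coordinates then produces the required strictly positive weight vector, and the corresponding minimizer (which lies on the same supporting face and can be taken to be $\alpha$ itself) completes the equivalence.
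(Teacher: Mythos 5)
Your proof is correct and follows the same basic route as the paper's: the forward direction (a minimizer $\alpha$ for strictly positive weights yields a hyperplane through $\Sigma(\alpha)$ with all of $M$ on one side, so $\Sigma(\alpha)$ lies on the boundary of the hull) is essentially identical. Where you genuinely differ is the reverse direction. The paper simply asserts that a boundary point is a minimum for some weights ``by the definition of the convex hull'' and never addresses whether the supporting normal can be taken with every $x_j>0$, which is needed because only strictly positive weights define a weighted norm; you supply exactly this missing argument via Dirichlet's unit theorem, using units to produce points of $M$ whose coordinates blow up in a prescribed slot, which rules out negative entries of the normal. That is a real gain in rigor over the paper's one-line dismissal. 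One remark: your final perturbation step (adding $\epsilon\sum_{j\colon x_j=0}e_j$ and arguing it remains a supporting normal) is the only vague spot, and it is also unnecessary. The same unit construction kills zero entries directly: if $\vec{x}\ge 0$ is nonzero and vanishes on a nonempty proper set of coordinates, choose units $u_n$ with $\sigma_j(u_n\bar{u}_n)\to 0$ for every $j$ in the support of $\vec{x}$ (possible since the logarithmic image of the units is a full rank-$(k-1)$ lattice in the trace-zero hyperplane); then $\vec{x}\cdot\Sigma(u_n)\to 0<\vec{x}\cdot\Sigma(\alpha)$, so such an $\vec{x}$ cannot support $\mathrm{conv}(M)$ at $\Sigma(\alpha)$ at all. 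Hence every supporting normal at a point of $M$ is automatically strictly positive and no perturbation is required; the case $k=1$ is trivial since the single weight merely rescales the form.
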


\begin{proof}

For given $\{
x_1,x_2,\dots,x_k \}$ and
$c$, we define a hyperplane in $\mathbb{R}^k$ by

\begin{equation*}
\sum_{j=1}^k x_j \xi_j = c,
\end{equation*}

\noindent where $\xi_j$ are the standard coordinates on $\mathbb{R}^n$. If we have a point
$\Sigma(\alpha)$ that is a minimum point for some choice of $\{ x_1,x_2,\dots,x_k \}$, then every point in $M$ will have 

\begin{equation*}
\sum_{j=1}^k x_j \xi_j \geq \langle \alpha, \alpha \rangle_{x_1, x_2, \dots, x_k}.
\end{equation*}

\noindent This means that every point in $M$ will lie on or above the hyperplane
defined by $\{
x_1,x_2,\dots,x_k \}$ and $\langle \alpha, \alpha \rangle_{x_1, x_2,
  \dots, x_k}$. As we vary the weights, these hyperplanes trace out the boundary of the convex hull
of the set $M$. Also, for any point $\Sigma(\alpha)$ in $M$ on the convex hull of $M$,
there is some choice of weights that makes the weighted norm of $\alpha$ a minimum,
by the definition of the convex hull.

\end{proof}

\subsection{Tiling with units}
\label{tiling}

Now that we have shown in Lemma \ref{hull} that the minimal vectors occur on the boundary convex hull of $M$, we want to use this fact to place an upper bound on the field norm of a minimal vector. Consider the subset $\Sigma(U)$ of $M$, where $U$ is the group of units of $\mathcal{O}_F$. It is a basic result from number
theory (see \cite{algnum}) that the field norm maps algebraic integers into
$\mathbb{Z}$. From this, we can conclude that $N_\mathbb{Q}(u) = 1$ for
a unit $u$, since 1 is the only positive unit in the
integers. Conversely, if an element $u \in \mathcal{O}_F$ has
$N_\mathbb{Q}(u) = 1$, we can conclude that $u$ is a unit, since its
inverse is the product of its Galois conjugates. Thus, the set
$\Sigma(U)$ is precisely the set of points in $M$ that lie on the
hyperboloid $\xi_1 \xi_2 \dots \xi_k = 1$.

We want to use the set $\Sigma(U)$ to bound the field norm of
a point on the boundary of the convex hull of $M$. Consider a
\emph{triangulation} of $\Sigma(U)$, that is, a way of attaching $(k-1)$-simplices to $\Sigma(U)$ so that the surface $T$ formed by their union
divides the space $\mathbb{R}^k$ into two disjoint pieces. We obtain the
following result.

\begin{lem}
\label{triang}
Let $\alpha$ be an element of $\mathcal{O}_F$. Given a triangulation of
$\Sigma(U)$, if $\alpha$ has a minimum weighted norm for
some choice of weights, then $N_\mathbb{Q}(\alpha) \leq
\text{\emph{max}}\  N_\mathbb{Q}(\beta)$, where the maximum is taken over
points $\beta$ in the surface $T$.
\end{lem}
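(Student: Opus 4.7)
The plan is to relate $\Sigma(\alpha)$ to the surface $T$ by a radial scaling and then exploit the fact that the field norm is simply the product of coordinates: $N_{\mathbb{Q}}(\alpha) = \prod_{j=1}^{k} \sigma_j(\alpha\bar\alpha)$. A second observation driving the argument is that $T \subseteq \mathrm{conv}(M)$, because every vertex of every simplex in $T$ lies in $\Sigma(U) \subseteq M$.

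By Lemma \ref{hull}, there exist positive weights $x_1, \dots, x_k$ for which $\Sigma(\alpha)$ attains the minimum of $\sum_j x_j \xi_j$ on $\mathrm{conv}(M)$. Setting $c = \langle \alpha, \alpha \rangle_{x_1, \dots, x_k} > 0$, this means $\mathrm{conv}(M)$ lies in the half-space $\sum_j x_j \xi_j \geq c$. I would then trace the ray $\{t\,\Sigma(\alpha) : t \geq 0\}$ from the origin through $\Sigma(\alpha)$ and identify the first point $\beta = s\,\Sigma(\alpha)$ at which this ray crosses $T$, which exists because $T$ separates $\mathbb{R}^k$ into two components with the origin in one of them. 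Since $\beta \in T \subseteq \mathrm{conv}(M)$, the supporting-hyperplane inequality gives $sc = \sum_j x_j \beta_j \geq c$, forcing $s \geq 1$. Taking the product of coordinates of $\Sigma(\alpha) = s^{-1}\beta$ then yields
$$N_{\mathbb{Q}}(\alpha) = s^{-k} \prod_j \beta_j = s^{-k}\, N_{\mathbb{Q}}(\beta) \leq N_{\mathbb{Q}}(\beta) \leq \max_{\beta' \in T} N_{\mathbb{Q}}(\beta'),$$
which is the desired bound.

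The step I expect to be the main obstacle is justifying that the ray from the origin through $\Sigma(\alpha)$ genuinely crosses $T$. The bare assumption that $T$ separates $\mathbb{R}^k$ into two pieces is not enough by itself; one also needs that $\log \Sigma(U)$ is a full-rank lattice in the trace-zero hyperplane, by Dirichlet's unit theorem, so that the triangulation wraps all the way around the positive orthant and every positive ray from the origin crosses it. I would verify this wrapping property explicitly before invoking the intersection argument, after which the supporting-hyperplane computation above delivers the inequality in a few lines.
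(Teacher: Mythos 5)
Your proposal is correct, and at its core it is the paper's argument with the geometry made explicit rather than a genuinely different strategy; still, the comparison is worth recording. Both proofs rest on the same two facts you isolate: minimality of $\alpha$ (the content of one direction of Lemma \ref{hull}) puts $M$, hence $\mathrm{conv}(M)$, in the half-space $\sum_j x_j \xi_j \geq c$ with $c = \langle \alpha, \alpha \rangle_{x_1,\dots,x_k}$, and $T \subseteq \mathrm{conv}(M)$ because every simplex of the triangulation has its vertices in $\Sigma(U) \subseteq M$. Where the paper then argues by contraposition in informal language --- a point of $M$ whose field norm exceeds $\max_T N_\mathbb{Q}$ ``must lie above the triangulation'' and so cannot sit on the boundary of the hull --- your radial-scaling computation is precisely what that sentence is hiding: the product of coordinates is homogeneous of degree $k$ along rays from the origin, so comparing $\Sigma(\alpha)$ with the crossing point $\beta = s\,\Sigma(\alpha) \in T$ and reading $s \geq 1$ off the half-space inequality converts the vague ``above/below'' into $N_\mathbb{Q}(\alpha) = s^{-k} N_\mathbb{Q}(\beta) \leq \max_T N_\mathbb{Q}$, with no mention of the hull boundary needed. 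The one step you defer --- that the positive ray through $\Sigma(\alpha)$ actually meets $T$ --- is exactly the step the paper also leaves unaddressed, burying it in the stipulation that $T$ divide $\mathbb{R}^k$ into two pieces and in the undefined word ``above''; your sketch for closing it is the right one: by Dirichlet's unit theorem $\log \Sigma(U)$ is a full-rank lattice in the trace-zero hyperplane, so the canonical triangulation of Section \ref{tiling}, viewed in logarithmic coordinates, stays within bounded distance of that hyperplane and is crossed by every line in the direction $(1,\dots,1)$, i.e.\ by the image of every positive ray. Since Lemma \ref{fundamental} only ever invokes that canonical triangulation, verifying the wrapping there suffices for the paper's purposes, whereas for a completely arbitrary ``triangulation'' in the paper's sense you would indeed need separation plus properness. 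Two minor points: take $\alpha \neq 0$ (as in Theorem \ref{bound}) so that $c > 0$ and all coordinates of $\Sigma(\alpha)$ are strictly positive, and note that any crossing point of the ray with $T$, not just the first, makes your inequality chain work.
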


\begin{proof}

Clearly, if a
point in $M$ lies above such a triangulation, it cannot be on the
boundary of the convex hull, since every simplex in the triangulation lies on or above the boundary of the convex hull. Furthermore, if a point in $M$ has field norm greater than the maximum value of $N_\mathbb{Q}$ over the
triangulation, it must lie above the triangulation and thus cannot lie on the
boundary of the convex hull. Together with Lemma \ref{hull}, this proves
Lemma \ref{triang}.

\end{proof}

We now use the fundamental domain $\mathcal{D}$ of the group of units to create a
canonical triangulation. Consider the sets $u \Delta_s$, where $u$ is any unit. Each set $u \Delta_s$, under the map $\Sigma$, forms the vertices of a simplex. Taken together, the simplices formed by the images of $u \Delta_s$ for all $u \in U$ and $s \in S_{k-1}$ give a triangulation of $\Sigma(U)$.

\subsection{Our bound}

In this section, we show that we only need to consider the simplices $\Sigma(\Delta_s)$ by proving Lemma \ref{fundamental}, and conclude the proof of Theorem \ref{bound} by calculating the maximum value of the field norm over each simplex.

\begin{lem}
\label{fundamental}
Let $\alpha$ be an element of $\mathcal{O}_F$. If $\alpha$ has a minimum weighted norm for
some choice of weights, then $N_\mathbb{Q}(\alpha) \leq
\text{\emph{max}}\  N_\mathbb{Q}(\beta)$, where the maximum is taken over
points $\beta$ on the simplices $\Sigma(\Delta_s)$.
\end{lem}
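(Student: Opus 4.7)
The plan is to invoke Lemma \ref{triang} with the explicit triangulation constructed at the end of Section \ref{tiling}, namely $T = \bigcup_{u \in U,\, s \in S_{k-1}} \Sigma(u\Delta_s)$, and then argue that, as far as the field norm is concerned, all the unit-translates $u\Delta_s$ contribute the same maximum as the base simplex $\Delta_s$. So the bound reduces from a maximum over the entire infinite triangulation to a maximum over the finitely many simplices $\Sigma(\Delta_s)$.

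First I would record how multiplication by a unit looks on $\mathbb{R}^k$. Since each $\sigma_j$ is a ring homomorphism, for any $u \in U$ and $\alpha \in \mathcal{O}_F$ we have $\sigma_j(u\bar u\,\alpha\bar\alpha) = \sigma_j(u\bar u)\,\sigma_j(\alpha\bar\alpha)$, so $\Sigma(u\alpha) = \Sigma(u)\odot\Sigma(\alpha)$, where $\odot$ denotes coordinate-wise product. The linear map $M_u\colon \mathbb{R}^k \to \mathbb{R}^k$ defined by $M_u(\xi_1,\dots,\xi_k) = (\sigma_1(u\bar u)\xi_1,\dots,\sigma_k(u\bar u)\xi_k)$ therefore carries $\Sigma(\Delta_s)$ to $\Sigma(u\Delta_s)$ and, being linear, sends the simplex spanned by $\Sigma(\Delta_s)$ bijectively onto the simplex spanned by $\Sigma(u\Delta_s)$.

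Next I would observe that the function $N_\mathbb{Q}$, extended to $\mathbb{R}^k$ by $N_\mathbb{Q}(\xi_1,\dots,\xi_k) = \xi_1\xi_2\cdots\xi_k$ (which agrees with the field norm on the image of $\Sigma$), is invariant under $M_u$. Indeed,
\begin{equation*}
N_\mathbb{Q}(M_u(\xi)) = \prod_{j=1}^k \sigma_j(u\bar u)\,\xi_j \;=\; N_\mathbb{Q}(u)\cdot N_\mathbb{Q}(\xi) \;=\; N_\mathbb{Q}(\xi),
\end{equation*}
since $u$ is a unit and so $N_\mathbb{Q}(u) = 1$, as already noted in Section \ref{tiling}. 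Consequently, for every unit $u$ and every $s \in S_{k-1}$, the map $M_u$ preserves the value of $N_\mathbb{Q}$ pointwise, and hence $\max_{\beta \in \Sigma(u\Delta_s)} N_\mathbb{Q}(\beta) = \max_{\beta \in \Sigma(\Delta_s)} N_\mathbb{Q}(\beta)$.

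Finally I would assemble the pieces. Taking the maximum over the whole triangulation $T$ and using the previous display,
\begin{equation*}
\max_{\beta \in T} N_\mathbb{Q}(\beta) \;=\; \max_{u \in U,\, s \in S_{k-1}} \max_{\beta \in \Sigma(u\Delta_s)} N_\mathbb{Q}(\beta) \;=\; \max_{s \in S_{k-1}} \max_{\beta \in \Sigma(\Delta_s)} N_\mathbb{Q}(\beta),
\end{equation*}
so Lemma \ref{triang} upgrades directly to the statement of Lemma \ref{fundamental}. There is no real obstacle beyond checking the interpretation of $N_\mathbb{Q}$ as a multiplicative function on $\mathbb{R}^k$; the whole argument is just the observation that the action of $U$ on $\mathbb{R}^k$ induced by $\Sigma$ preserves the hypersurfaces $\{N_\mathbb{Q} = \mathrm{const}\}$, which is exactly what makes the triangulation by unit translates compatible with the bound we are after.
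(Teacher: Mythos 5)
Your proposal is correct and follows essentially the same route as the paper: you use the canonical triangulation by unit translates $\Sigma(u\Delta_s)$, note that multiplication by a unit acts on $\mathbb{R}^k$ as the diagonal map $\xi_j \mapsto \sigma_j(u\bar{u})\xi_j$ which preserves the product of coordinates because $N_\mathbb{Q}(u)=1$, and conclude via Lemma \ref{triang} that the maximum over the whole triangulation equals the maximum over the simplices $\Sigma(\Delta_s)$. Your write-up is simply a more explicit version of the paper's argument, with the maps $M_u$ and the extension of $N_\mathbb{Q}$ to $\mathbb{R}^k$ spelled out.
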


\begin{proof}
The map that allowed us to create a triangulation of $U$ from its fundamental domain as in Section \ref{tiling},
multiplication by a unit $u$, descends to a linear map on
the image $\Sigma(U)$, which maps $\xi_j \mapsto \sigma_j(u \bar{u}) \xi_j$. The hyperboloids $\xi_1\xi_2\dots \xi_k = c$ are
invariant under this map, because $\sigma_1(u \bar{u})\sigma_2(u \bar{u})\dots\sigma_k(u \bar{u}) = N_\mathbb{Q}(u) = 1$, since $u$ is a unit. Thus, the field norm is preserved by multiplication by a unit. Thus, any two
simplices that can be connected by multiplication by a unit have the
same value of $\text{max}\ N_\mathbb{Q}(\beta)$, where the maximum is
taken over each simplex independently. Since any simplex in the
canonical triangulation can be mapped to one of the simplices $\Sigma(\Delta_s)$
by multiplication by a unit, we can get our bound simply by considering
the simplices $\Sigma(\Delta_s)$.
\end{proof}

We now calculate the value of $\text{max}\ N_\mathbb{Q}(\beta)$
over a single simplex. Let the vertices of the simplex be $\{ v_1, v_2,
\dots, v_k\}$. We map the simplex with a linear
transformation, under which the hyperboloids $\xi_1\xi_2\dots \xi_k = c$ are invarient, to a simplex where the sum $c$ of each
vertex's coordinates is the same for all vertices. In this case, the maximum field norm is just $(c/k)^k$, the norm of the
centroid of the transformed simplex. 

Let $w$ be the wedge product of the $(k-1)$ vectors $(v_j-v_{j-1})$,
divided by the $k$th root of the product of the wedge product's
components. Then, if we let $w_j$ denote the $j$th component of
$\star \ w$, we have that $w_1 w_2 \dots w_k=1$, where $\star$ is the Hodge star operator. Let $A$ be the linear map $v \mapsto \text{diag}(w_1, w_2, \dots,
w_k) v$, which leaves the hyperboloids $\xi_1\xi_2\dots \xi_k = c$ invarient. The sum of the components of the image of a vector $v$ under $A$ is just $| \star (w \wedge v)\ |$. Since $w
\wedge (v_j - v_{j-1}) = 0$, we have by induction that $w \wedge v_j = w
\wedge v_1$ for all $j$. Thus, our linear map is of the desired form,
and the maximum value of $N_\mathbb{Q}(\beta)$ on the simplex is
$|\star(w \wedge v_1)/k\ |^k$.

To write this bound explicitly in terms of the vectors $v_j$, we
consider the definition of $w$. The $l$th component of $w$ is the volume
of the $(k-1)$-dimensional simplex formed by the projections of
$(v_j-v_{j-1})$ onto the hyperplane orthogonal to the $l$th basis
vector. This is exactly $\text{det} B_l$. Additionally, the product $\star \left( v_1 \wedge
(v_2 - v_1) \wedge \dots \wedge (v_k - v_{k-1}) \right) = \star \left( v_1 \wedge v_2 \wedge
\dots \wedge v_k \right) = \text{det} A$. Here, $A$ and $B_l$ are defined
as in Section \ref{statement}.

If we take these calculations and use them to rewrite our bound, we
obtain

\begin{equation*}
N_\mathbb{Q}(\beta) \leq \left|  \left(
    \frac{\text{det} A}{k} \right)^k
  \prod_{l = 1}^k \frac{1}{\text{det} B_l} \right|,
\end{equation*}

\noindent for $\beta$ in our given simplex. Taking the maximum of this bound over
$\Sigma(\Delta_s)$ for $s \in S_{k-1}$ proves Theorem \ref{bound}.

\section{Consequences of bound}
\label{consequences}

In this section, we examine two corollaries of our main theorem, Theorem \ref{bound}. The first extends Theorem \ref{bound} to any principal ideal of $\mathcal{O}_F$, and the second shows that Theorem \ref{bound} implies that there is a finite set of points in $\mathcal{O}_F$ that allows one to determine the set of minimal vectors in all principal ideals.

\subsection{Principal ideals}

Corollary \ref{principal} extends Theorem \ref{bound} to a principal ideal of $\mathcal{O}_F$.

\begin{cor}
\label{principal}
Let $\mathcal{I}$ be a principal ideal of $\mathcal{O}_F$ generated by
an element $\kappa$, and let $\alpha$ be a vector in $\mathcal{I}$ which has a minimum weighted
norm for vectors in $\mathcal{I}$. Then, $\alpha$ satisfies:
\begin{equation*}
N_\mathbb{Q}(\alpha) \leq \text{\emph{max}} \left|  \left(
    \frac{\text{\emph{det}} A}{k} \right)^k
  \prod_{l = 1}^k \frac{1}{\text{\emph{det}} B_l} \right| N_\mathbb{Q}(\kappa),
\end{equation*}
\noindent where the maximum is taken over the simplices $\Sigma(\Delta_s)$.
\end{cor}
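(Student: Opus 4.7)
The plan is to reduce the principal-ideal case to Theorem \ref{bound} by exploiting the fact that $\mathcal{I} = \kappa \mathcal{O}_F$, so multiplication by $\kappa$ gives a bijection $\mathcal{O}_F \to \mathcal{I}$ under which the weighted norm transforms in a controlled way. Specifically, for any $\beta \in \mathcal{O}_F$ and any weights $(x_1,\dots,x_k)$,
\begin{equation*}
\langle \kappa\beta, \kappa\beta \rangle_{x_1,\dots,x_k}
= \sum_{j=1}^k x_j \sigma_j(\kappa\bar\kappa)\,\sigma_j(\beta\bar\beta)
= \langle \beta, \beta \rangle_{y_1,\dots,y_k},
\end{equation*}
where $y_j = x_j \sigma_j(\kappa\bar\kappa)$. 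Since each $\sigma_j(\kappa\bar\kappa)$ is a positive real, the map $(x_j) \mapsto (y_j)$ is a bijection of the positive orthant to itself.

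The first step is to observe that $\alpha \in \mathcal{I}$ has minimal weighted norm for weights $(x_j)$ among vectors in $\mathcal{I}$ if and only if writing $\alpha = \kappa\beta$ with $\beta \in \mathcal{O}_F$, the vector $\beta$ has minimal weighted norm for weights $(y_j)$ among vectors in $\mathcal{O}_F$. This is immediate from the displayed identity above together with the bijectivity of $\beta \mapsto \kappa\beta$. The second step is to apply Theorem \ref{bound} directly to $\beta$: the bound there does not depend on the choice of weights (the right-hand side involves only the simplices $\Sigma(\Delta_s)$ built from a basis of $\mathcal{O}_F^\times$), so we obtain
\begin{equation*}
N_\mathbb{Q}(\beta) \le \max \left| \left( \frac{\det A}{k} \right)^{\!k} \prod_{l=1}^k \frac{1}{\det B_l} \right|.
\end{equation*}

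The third step is to use multiplicativity of the field norm, $N_\mathbb{Q}(\alpha) = N_\mathbb{Q}(\kappa)\, N_\mathbb{Q}(\beta)$, to conclude the desired inequality by multiplying through by $N_\mathbb{Q}(\kappa)$. There is really no obstacle here; the only subtlety worth checking carefully is that the fundamental domain $\mathcal{D}$ and hence the simplices $\Sigma(\Delta_s)$ are intrinsic to $F$ (they depend only on a choice of generators of $\mathcal{O}_F^\times$ modulo torsion), so the same simplices serve for every principal ideal and the bound is uniform in $\mathcal{I}$ up to the factor $N_\mathbb{Q}(\kappa)$. This uniformity is precisely what will be needed in the subsequent Corollary \ref{finite}.
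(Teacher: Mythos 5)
Your proposal is correct and follows essentially the same route as the paper: factor $\alpha = \kappa\beta$ with $\beta \in \mathcal{O}_F$, note that minimality of $\alpha$ over $\mathcal{I}$ corresponds to minimality of $\beta$ over $\mathcal{O}_F$, then apply Theorem \ref{bound} together with multiplicativity of $N_\mathbb{Q}$. The only difference is that you make explicit the change of weights $y_j = x_j\sigma_j(\kappa\bar{\kappa})$ underlying the ``minimal if and only if minimal'' step, which the paper asserts without writing out.
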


\begin{proof}
Since $\mathcal{I}$ is principal, every element $\alpha$ can be written
as $\alpha' \kappa$ for $\alpha' \in \mathcal{O}_F$; such an $\alpha$ is
minimal over $\mathcal{I}$ if and only if $\alpha'$ is minimal over
$\mathcal{O}_F$. In this case, we can apply Theorem \ref{bound} to the
expression $N_\mathbb{Q}(\alpha) = N_\mathbb{Q}(\alpha' \kappa) =
N_\mathbb{Q}(\alpha') N_\mathbb{Q}(\kappa)$, and Corollary
\ref{principal} follows.
\end{proof}

\subsection{Existence of characteristic finite set}

Corollary \ref{finite} proves the existence of a finite set of points in $\mathcal{O}_F$ that contain all the information about minimal vectors in any principal ideal of $\mathcal{O}_F$.

\begin{cor}
\label{finite}
There exists a finite set $E$ of elements of $\mathcal{O}_F$ such that if $\alpha$ is a minimal vector for some choice of weighted norm in $\mathcal{O}_F$ or in a principal ideal $\mathcal{I}$ of $\mathcal{O}_F$, then $\alpha$ can be transformed canonically into a corresponding point $\eta \in E$. Further, the choice of weighted norm for which $\alpha$ is minimal is transformed into a weighted norm for which the corresponding point $\eta$ in $E$ is minimal over $\mathcal{O}_F$.
\end{cor}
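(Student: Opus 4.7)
The plan is to combine the uniform field-norm bound of Theorem \ref{bound} with the unit-group action from Section \ref{tiling} to exhibit $E$ as a finite set of orbit representatives.

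First, I would reduce the principal-ideal case to the case of $\mathcal{O}_F$ itself. If $\mathcal{I} = (\kappa)$ and $\alpha \in \mathcal{I}$ is minimal for weights $\{x_1, \ldots, x_k\}$, writing $\alpha = \kappa \alpha'$ with $\alpha' \in \mathcal{O}_F$, the same rescaling computation used for $\theta_\mathcal{I}$ in Section \ref{motiv} shows that $\alpha'$ is minimal over $\mathcal{O}_F$ for the weights $\{x_j \sigma_j(\kappa \bar\kappa)\}$. So it suffices to treat minimal vectors in $\mathcal{O}_F$, where Theorem \ref{bound} supplies a uniform upper bound $N_0 = N_0(F)$ on $|N_\mathbb{Q}(\alpha')|$ that is independent of the weights.

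Next, as in Lemma \ref{fundamental}, multiplication by a unit $u \in \mathcal{O}_F^\times$ preserves $N_\mathbb{Q}$ and dilates each coordinate by $\sigma_j(u\bar u)$, carrying a minimal vector for weights $\{x_j\}$ to a minimal vector for $\{x_j/\sigma_j(u\bar u)\}$. The full set of minimal vectors in $\mathcal{O}_F$ is therefore invariant under the unit group, and we may quotient by its action. The classical fact that $\mathcal{O}_F$ has only finitely many integral ideals of bounded norm, together with the fact that each principal ideal determines a generator uniquely up to $\mathcal{O}_F^\times$, implies that the nonzero elements of $\mathcal{O}_F$ with $|N_\mathbb{Q}| \leq N_0$ form only finitely many $\mathcal{O}_F^\times$-orbits.

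To make the transformation canonical, I would fix once and for all a fundamental domain $\mathcal{F}$ for the action of $\mathcal{O}_F^\times$, for instance the half-open parallelepiped spanned by the images of $g_1, \ldots, g_{k-1}$ under the logarithmic embedding of $K^\times$, together with an arbitrary tie-breaking choice for the finite torsion subgroup of roots of unity. Let $E$ consist of the unique representative in $\mathcal{F}$ of each orbit with $|N_\mathbb{Q}| \leq N_0$; this is finite by the previous step. The canonical assignment sends $\alpha \in \mathcal{I}$ to $\eta = u\alpha/\kappa \in E$, where $u \in \mathcal{O}_F^\times$ is the unique unit placing $\alpha/\kappa$ inside $\mathcal{F}$, and the weights are correspondingly rescaled so that $\eta$ is minimal over $\mathcal{O}_F$. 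The main obstacle I anticipate is purely bookkeeping --- specifying the canonical choice of $\mathcal{F}$ on its boundary and verifying that the weight-transformation composes correctly through the two reductions --- since the substantive finiteness statement is already built into Theorem \ref{bound}.
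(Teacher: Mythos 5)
Your proposal is correct, and its two reduction steps coincide with the paper's: dividing a minimal vector of $\mathcal{I}=(\kappa)$ by $\kappa$ while rescaling the weights by $\sigma_j(\kappa\bar\kappa)$ (exactly the computation done for $\theta_{\mathcal{I}}$ in Section \ref{motiv}), and then using multiplication by units, with the weights pulled back by $x_j \mapsto x_j/\sigma_j(u\bar u)$, to move to a fixed set of representatives. Where you genuinely diverge is in how finiteness of $E$ is obtained and in what $E$ is. The paper never invokes the numerical bound of Theorem \ref{bound} here: it takes $E$ to be the lattice points whose images under $\Sigma$ lie in the compact chamber between the hyperboloid $\xi_1\cdots\xi_k=1$ and the triangulation over the fundamental domain $\mathcal{D}$, and gets finiteness from compactness of that chamber, discreteness of $\Sigma(\mathcal{O}_F)$, and finiteness of the fibers of $\Sigma$ (the torsion units). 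You instead take $E$ to be canonical unit-orbit representatives (via the logarithmic embedding, with a torsion tie-break) of all elements with $|N_\mathbb{Q}|\leq N_0$, where $N_0$ is the weight-independent bound of Theorem \ref{bound}, and you get finiteness from the classical fact that there are only finitely many ideals of bounded norm and that generators of a principal ideal form a single unit orbit. Both are valid; the paper's version keeps $E$ tied to the same triangulation geometry used in the proof of Theorem \ref{bound} (so $E$ is found by enumerating lattice points in an explicit compact region), while yours gives a cleaner abstract finiteness argument, a sharper arithmetic description of $E$, and handles the torsion ambiguity explicitly rather than implicitly through the finite fibers of $\Sigma$. The remaining bookkeeping you flag (boundary conventions for the fundamental parallelepiped, independence of the choice of generator $\kappa$ up to the tie-break) is real but minor, and the paper's own proof glosses over the analogous boundary issues in its chamber decomposition.
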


\begin{proof}

The only points in $\mathcal{O}_F$ which could possibly be minimal for some weighted norm have images under $\Sigma$ that lie between the hyperboloid $\xi_1\xi_2\dots \xi_n = 1$ and the
triangulation of $\Sigma(U)$. By the proof of Lemma \ref{fundamental}, this
region can be split up into chambers, each of which can be transformed
into the chamber lying underneath the fundamental domain by a linear
transformation $T$ corresponding to multiplication by a unit. We denote by $E$ the set of point in $\mathcal{O}_F$ with images inside this fundamental chamber. Since these transformations leave $\mathcal{O}_F$ invariant, they take any minimal point $\alpha \in \mathcal{O}_F$ to a minimal point $\eta \in E$. Additionally, we obtain a corresponding choice of weighted norm for which $\eta$ is minimal by pre-composing the weighted norm for which $\alpha$ is minimal with the inverse of the transformation $T$ which takes $\alpha$ to $\eta$, i.e., we take the pullback of the weighted norm along $T^{-1}$.

For a principal ideal $\mathcal{I}$, as the proof of Corollary
\ref{principal} and the discussion in Section \ref{motiv} indicate, there is a linear transformation $S$ that takes the elements of $\mathcal{I}$ to the elements of $\mathcal{O}_F$, which is the inverse of multiplication by the generator of the ideal. The map $S$ takes a minimal element $\beta \in \mathcal{I}$ for one choice of weighted norm to a minimal point in $\mathcal{O}_F$ for a different weighted norm, the pullback of the weighted norm along $S^{-1}$. This can then be transformed into an element $\eta \in E$ and a corresponding weighted norm as above.

All that remains to prove the first part of Corollary \ref{finite} is to show that $E$ is finite. Since the fundamental chamber is compact, and
$\Sigma(\mathcal{O}_F)$ is discrete, the image $\Sigma(E)$ must be finite. We recall that $\Sigma(\mu) = (\sigma_1(\mu \bar{\mu}),\dots, \sigma_k(\mu \bar{\mu}))$. Since each embedding $\sigma_j$ is injective, if $\Sigma(\mu) = \Sigma(\nu)$, then $\mu \bar{\mu} = \nu \bar{\nu}$. The kernel of the map $\mu \mapsto \mu \bar{\mu}$ from $F$ to $K$ consists of the torsion elements of the group of units of $\mathcal{O}_F$, by Dirichlet's unit theorem (see \cite{algnum}). This is finite, because $F$, as a number field, is a finite degree extension of the rationals. Thus, the preimage in $\mathcal{O}_F$ of a point in $\Sigma(E)$ is finite, so the set $E$ itself must be finite.
\nopagebreak
\end{proof}

\begin{rem}
The proof of Corollary \ref{finite} relies heavily on the fact that we can vary the weights that define the weighted
norm. Many times throughout the proof, we exchange one choice of weighted norm for another, so that a minimal vector under the first choice gets sent to a minimal vector under the second choice. We can do this because the pullback of a weighted norm along a map corresponding to multiplication by an element of $F$ is always a weighted norm for some other choice of weights. The important thing to note about Corollary \ref{finite} is that once the set $E$ is completely understood, we can find the minimal vectors for any given choice of weighted norm by running Corollary \ref{finite} backwards. We first identify the correct linear transformation to turn a known choice of the weighted norm into an unknown choice, and use that transformation to map a minimal point in $E$ to a new, unknown minimal point.
\end{rem}

\section{Craig's Lattices}
\label{craig's}

We conclude by considering some applications of Theorem \ref{bound} to
the Craig's Difference Lattice Problem. In this case, $F =
\mathbb{Q}[\zeta_n]$ and $\mathcal{O}_F = \mathbb{Z}[\zeta_n]$. Our
principal ideals are generated by $(1-\zeta_n)^r$. We want to completely
classify the shortest vectors for the specific choice of equal weights
to define the norm.

In addition to Theorem \ref{bound}, we have the result from class field
theory (see \cite{classfield}) that $N_\mathbb{Q}(\alpha)$ can only be one or something $\geq n$ for nonzero $\alpha
\in \mathbb{Z}[\zeta_n]$. We use this alongside Theorem \ref{bound} to
solve the cases $n=5, 7$. While these are not new results, studying how our theorem
is applied to easy cases allows us to understand what an application of
our theorem to a more general case would look like.

Let $n = 5$, and let $F$ be $\mathbb{Q}[\zeta_5]$. In this case there
are two conjugate pairs of complex embeddings, so $k=2$. We claim that the set
of minimal vectors is exactly the units of $\mathcal{O}_F$. Since the
only points with $N_\mathbb{Q}$ greater than one have $N_\mathbb{Q}$ at
least five, if we can show that our bound is less than five then only the
units can possibly be minimal. Furthermore, since the hyperboloid $\xi_1
\xi_2 = 1$ is convex, each unit can actually be obtained as a minimum.

To check if our bound is less than five, we have to check $(k-1)! =
1$ simplex. This simplex has vertices $\{(1,1), ((3+\sqrt{5})/2,
(3-\sqrt{5})/2)\}$, the images of the identity and the generator of the
group of units of $\mathcal{O}_F$. We find that $
|\text{det}A |=
\left| \text{det} \left( \begin{smallmatrix}
1 & \frac{3+\sqrt{5}}{2} \\
1 & \frac{3-\sqrt{5}}{2} \\
\end{smallmatrix} \right) \right| = \sqrt{5}.$ We also have that
$\text{det} B_1 = (1+\sqrt{5})/2$ and $\text{det} B_2 =
(1-\sqrt{5})/2$. Theorem \ref{bound} then says that, if $\alpha$ is a
minimal vector in $\mathcal{O}_F$,

\begin{equation*}
N_\mathbb{Q}(\alpha) \leq \left| \left( \frac{\sqrt{5}}{2} \right)^2
  \frac{1}{(\frac{1+\sqrt{5}}{2})(\frac{1-\sqrt{5}}{2})} \right| =
\frac{5}{4} < 5.
\end{equation*}

\noindent Thus, the minimal vectors in the Craig's lattices $\mathbb{A}_4^{(r)}$ are
of the form $(1-\zeta_5)^r u$, where $u$ is a unit.

We can apply similar arguments to the case $n=7$. In this case, $k=3$,
and we have to check $(k-1)! = 2$ simplices. Our bound gives values of
$49/27$ for one simplex and $56/27$ for the other. Theorem \ref{bound}
then tells us that if $\alpha$ is minimal,

\begin{equation*}
N_\mathbb{Q}(\alpha) \leq \frac{56}{27} < 7,
\end{equation*}

\noindent and the minimal vectors in the lattices $\mathbb{A}_6^{(r)}$ are units
scaled by $(1-\zeta_7)^r$.

This pattern does not continue for all $n$; in fact, for $n = 11$, our
bound is greater than 11, so we can not immediately determine all of the
minimal vectors. However, the methods used to prove Theorem \ref{bound}
still apply in this situation and give insight into the Craig's
Difference Lattice problem for all $n$ and $r$.

\section{Acknowledgments} 

I'd like to thank my mentor, Dmitry Vaintrob, who is currently a
graduate student at MIT. I'd also like to thank Professor Abhinav Kumar
for suggesting the problem. Thanks also goes to Tanya Khovanova and the MIT math department
for their support. I'd like to thank Professor John Rickert and Professor Jacob Sturm, as well as Simanta Gautam, Joshua
Brakensiek, Sitan Chen, and Alec Lai for their help in editing this
paper. This work would not have been possible without the
Center for Excellence in Education and the Research Science Institute,
along with my sponsors, Mr. Zachary Lemnios, Dr. Laura Adolfie, Dr. John
Fischer, and Dr. Robin Staffin from the Department of Defense, Ms. Debra
L. Waggoner from the Corning Incorporated Foundation, Dr. Xiang Gao, and
Dr. Li Tong.

\newpage

\begin{singlespace}
\thispagestyle{empty}
\bibliographystyle{style}

\bibliography{biblio}

\begin{thebibliography}{10}

\bibitem{4squares}
J.~L. Lagrange.
\newblock D\'{e}monstration d'un th\'{e}or\`{e}me d\'arithm\'{e}tique.
\newblock {\em Euvres}, vol.3:pp.189--201, 1770.

\bibitem{Hurwitz}
J.~H.~R. Ng.
\newblock Quaternions and the four square theorem.
\newblock Available at
  http://www.math.uchicago.edu/~may/VIGRE/VIGRE2008/REUPapers/Ng.pdf.

\bibitem{Gaussian}
J.~Stillwell.
\newblock {\em \emph{Introduction to} Theory of Algebraic Integers,
  \emph{Richard Dedekind}}.
\newblock Cambridge University Press, 1996.

\bibitem{Threesquares}
P.~J. Cho.
\newblock Sum of three squares and class number of imaginary quadratic fields.
\newblock {\em Proceedings of the Japan Academy}, Volume 87, Number
  6:pp.91--94, 2001.

\bibitem{NPhard}
D.~Micciancio.
\newblock Shortest vector problem.
\newblock {\em Encyclopedia of Algorithms}, pages pp.841--843, 2008.

\bibitem{constanthard}
D.~Micciancio.
\newblock The shortest vector problem is {NP}-hard to approximate to within
  some constant.
\newblock {\em SIAM Journal on Computing}, 30(6):pp.2008--2035, Mar. 2001.
\newblock Preliminary version in FOCS 1998.

\bibitem{Martinet}
J.~Martinet.
\newblock {\em Perfect Lattices in Euclidean Spaces}.
\newblock Springer-Verlag, Germany, 2010.

\bibitem{Serre}
J.-P. Serre.
\newblock {\em A Course In Arithmetic}.
\newblock Springer-Verlag, New York, 1973.

\bibitem{Elkies}
N.~Elkies.
\newblock Theta functions and weighted theta functions of euclidean lattices,
  with some applications.
\newblock Draft, Available at www.math.harvard.edu/~elkies/aws09.pdf.

\bibitem{BacBat}
C.~Bachoc and C.~Batut.
\newblock \'{E}tude algorithmique de r\'{e}seaux construits avec la forme
  trace.
\newblock {\em Experimental Mathematics}, vol.1:pp.183--190, 1992.

\bibitem{Hilbgenref}
E.~Z. Goren.
\newblock {\em Lectures on Hilbert modular varieties and modular forms}.
\newblock Eyal Z. Goren, with the assistance of Marc-Hubert Nicole, Providence,
  RI, 1963.

\bibitem{Garrett}
P.~B. Garrett.
\newblock {\em Holomorphic Hilbert modular forms}.
\newblock Wadsworth and Brooks/ Cole Advanced Books and Software, Pacific
  Grove, CA, 1990.

\bibitem{Hecke}
E.~Hecke.
\newblock \"{U}ber die l-funktionen und den dirichletschen primzahlsatz f\"{u}r
  beliebige zahlk\"{o}rper.
\newblock {\em Nachr. G\"{o}tt. Akad.}, 1917.

\bibitem{algnum}
S.~Lang.
\newblock {\em Algebraic Number Theory}.
\newblock Springer-Verlag, New York, 1994.

\bibitem{classfield}
J.~S. Milne.
\newblock Class field theory (v4.00).
\newblock Available at www.jmilne.org/math/.

\end{thebibliography}
\end{singlespace}

\end{document}